\newtheorem{theorem}{Theorem}[section]
\theoremstyle{remark}
\DeclareMathOperator{\Spec}{Spec}
\newcommand{\AM}{\mathsf{AM}}
\newcommand{\BPP}{\mathsf{BPP}}
\newcommand{\BQP}{\mathsf{BQP}}
\newcommand{\SZK}{\mathsf{SZK}}
\newcommand{\NP}{\mathsf{NP}}
\newcommand{\PH}{\mathsf{PH}}
\newcommand{\coAM}{\mathsf{coAM}}
\newcommand{\coNP}{\mathsf{coNP}}
\newcommand{\shP}{\mathsf{\#P}}
\renewcommand{\P}{\mathsf{P}}
\newcommand{\X}{\mathsf{X}}
\newcommand{\C}{\mathbb{C}}
\newcommand{\F}{\mathbb{F}}
\newcommand{\Z}{\mathbb{Z}}
\newcommand{\SL}{\mathrm{SL}}
\newcommand{\SU}{\mathrm{SU}}
\newcommand{\Hom}{\mathrm{Hom}}
\newcommand{\adj}{\mathrm{adj}}
\newcommand{\res}{\mathrm{res}}
\newcommand{\balpha}{\bar{\alpha}}
\newcommand{\poly}{\mathrm{poly}}
\newcommand{\vf}{\vec{f}}
\newcommand{\vg}{\vec{g}}
\newcommand{\vt}{\vec{t}}
\newcommand{\vx}{\vec{x}}
\newcommand{\vy}{\vec{y}}
\renewcommand{\bar}{\overline}
\renewcommand{\tensor}{\otimes}
\newcommand{\defeq}{\stackrel{\mathrm{def}}{=}}
\newcommand{\ie}{\textit{i.e.}}
\newcommand{\eatline}{\vspace{-\baselineskip}}
\newcommand{\thm}[1]{Theorem~\ref{#1}}
\renewcommand{\sec}[1]{Section~\ref{#1}}
\newcommand{\eq}[2]{\begin{equation}\label{#1}#2\end{equation}}
\newcommand{\qedeq}[1]{\begin{equation}#1 \tag*{\qedhere}\end{equation}}
\begin{document}
\title{Knottedness is in $\NP$, modulo GRH}

\author{Greg Kuperberg}
\email{greg@math.ucdavis.edu}
\thanks{Partly supported by NSF grant DMS CCF-1013079}
\affiliation{Department of Mathematics, University of
    California, Davis, CA 95616}

\begin{abstract} Given a tame knot $K$ presented in the form of a knot
diagram, we show that the problem of determining whether $K$ is knotted is
in the complexity class $\NP$, assuming the generalized Riemann hypothesis
(GRH).  In other words, there exists a polynomial-length certificate that
can be verified in polynomial time to prove that $K$ is non-trivial.  GRH is
not needed to believe the certificate, but only to find a short certificate.
This result complements the result of Hass, Lagarias, and Pippenger that
unknottedness is in $\NP$.  Our proof is a corollary of major results of
others in algebraic geometry and geometric topology.
\end{abstract}
\maketitle

\section{Introduction}

The algorithmic complexity of unknottedness is a long-standing open problem.
In other words, given a knot $K$ described by a knot diagram or by a
triangulation of its complement, is there a fast algorithm to decide whether
$K$ is the unknot?  (The question makes sense for tame knots; all knots in
this article will be tame.)  Haken \cite{Haken:normal} was the first to
show that there is \emph{any} algorithm.  Currently it is not known whether
unknottedness can be decided in polynomial time.  The question can be written
$$\text{Unknottedness} \stackrel{?}{\in} \P,$$
since $\P$ is the class of yes-no functions (or \emph{yes-no questions}
or \emph{decision problems}) on input strings that can be computed in
polynomial time.

Welsh \cite{Welsh:complexity} proposed the study of qualitative rather
than quantitative bounds on the algorithmic complexity of problems in knot
theory (and by extension, in low-dimensional topology).  In response, Hass,
Lagarias, and Pippenger \cite{HLP:complexity} showed that unknottedness
is in the complexity class $\NP$.  This is the class of yes-no questions
for which an answer of yes can be confirmed in polynomial time with the
aid of an auxiliary string called a witness, a proof, or a certificate.
For example, the question of whether an integer $N$ (written in binary)
is composite is trivially in $\NP$, because a certificate can consist of
a factorization $N = AB$ that proves that $N$ is composite.

By definition, the class $\coNP$ is the class of questions whose
negations, with no and yes switched, are in $\NP$.  In other words, a
problem is in $\coNP$ if there is a certificate for no rather than yes.
For example, it is a non-trivial result that primality, the negation of
compositeness, is in $\NP$, equivalently that compositeness is in $\coNP$
\cite{Pratt:prime}.  This result began an encouraging chain of results.
The computational complexity of primality was improved qualitatively
in stages until finally it was established that primality is in $\P$
\cite{Miller:primality,Rabin:primality,AM:elliptic,GK:elliptic,AKS:primes}.
However, there are other problems that are in both $\NP$ and $\coNP$ that
are thought to be hard, for instance, determining whether an integer is
the product of two primes.

\begin{theorem} Let $K \subset S^3$ be a knot described by a knot diagram,
a generalized triangulation, or an incomplete Heegaard diagram.  Then the
assertion that $K$ is knotted is in $\NP$, assuming the generalized Riemann
hypothesis (GRH).
\label{th:main} \end{theorem}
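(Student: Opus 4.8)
The plan is to certify knottedness by exhibiting a nonabelian representation of the knot group into $\SL_2$ over a small finite field. The starting point supplies \emph{soundness} and requires no hypotheses: if $K$ is the unknot then its complement is a solid torus, so $\pi_1(S^3 \setminus K) \cong \Z$ is abelian and every representation of it has abelian image. Consequently, a homomorphism $\rho \colon \pi_1(S^3 \setminus K) \to \SL_2(\F_{p^k})$ with nonabelian image already proves that $K$ is knotted. From any of the three input types I first extract, in polynomial time, a presentation of $\pi_1(S^3 \setminus K)$ with $\poly$ many generators and relations (the Wirtinger presentation for a diagram, and the analogous presentations from a generalized triangulation or an incomplete Heegaard diagram). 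A proposed certificate then consists of a prime power $p^k$ together with explicit matrices $X_1, \dots, X_g \in \SL_2(\F_{p^k})$, one per generator; the verifier checks that each relation holds and that some pair of the $X_i$ fails to commute. All of this is manifestly polynomial in the diagram size once $\log(p^k)$ is polynomial, so the entire content of the theorem is that a certificate of polynomial bit length \emph{exists} when $K$ is knotted.

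For existence I would invoke the theorem of Kronheimer and Mrowka that the group of any nontrivial knot admits an irreducible representation into $\SU(2)$, hence an irreducible --- and in particular nonabelian --- representation into $\SL_2(\C)$. Encoding the relations as polynomial equations in the matrix entries (each Wirtinger relation has bounded word length, so the equations have bounded degree and coefficients in $\{0, \pm 1\}$) and adjoining the open condition that a fixed commutator be nontrivial, cleared to an equation by the Rabinowitsch trick, I obtain a scheme $V$ over $\Z$ of polynomially bounded complexity whose $\C$-points are exactly the nonabelian representations. Kronheimer--Mrowka guarantees $V(\C) \neq \emptyset$. The task is thereby reduced to a purely arithmetic-geometric statement: a variety over $\Z$ of bounded degree and height that is nonempty over $\C$ has a point over a finite field $\F_{p^k}$ with $\log(p^k)$ polynomial.

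This last reduction is the heart of the matter and the place where GRH enters. The mechanism, in the spirit of Koiran's analysis of Hilbert's Nullstellensatz, is that complex solvability forces solvability modulo a positive density of primes, and GRH --- through effective forms of the Chebotarev density theorem --- bounds the least witnessing prime polynomially in the height and degree of the defining data. To keep the extension degree $k$ under control I would exploit a feature special to representation varieties: at any irreducible representation the $3$-dimensional conjugation action of $\mathrm{PSL}_2$ has finite stabilizer, so $V$ contains a geometrically irreducible component of dimension at least $3$ through the Kronheimer--Mrowka point. Once a prime $p$, small by effective Chebotarev under GRH, is chosen so that Frobenius fixes this component, the Lang--Weil estimate gives it on the order of $p^{\dim}$ points over $\F_p$, of which all but a lower-dimensional subvariety are nonabelian; hence a genuine nonabelian representation over $\F_p$ itself exists and furnishes the certificate with $k=1$.

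The main obstacle is precisely this quantitative passage from $\C$ to a small finite field: showing that the prime $p$, and the degree $k$ of any field extension one is forced to use, are both polynomially bounded. Unconditionally one knows only that \emph{some} finite field works, with no usable bound on its size; GRH is exactly what converts the qualitative reduction modulo $p$ into an effective one, and it is needed only to \emph{find} the short certificate, never to \emph{check} it. The positive-dimensionality of the conjugation orbit is what lets me hope for $k = 1$ and a clean Lang--Weil count; the delicate points to verify are that the reduction of the relevant component stays geometrically irreducible of the expected dimension for the chosen $p$, and that the effective bounds on the field of definition of that component are genuinely polynomial in the diagram size.
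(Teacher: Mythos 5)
Your overall architecture coincides with the paper's: soundness comes from $\pi_1$ of the unknot complement being $\Z$; the certificate is a small prime (power) together with matrices over the corresponding finite field, checked against a polynomial-length presentation; existence over $\C$ comes from Kronheimer--Mrowka (\thm{th:km}); the Rabinowitsch trick turns nonabelianness into an equation; and GRH enters only through effective Chebotarev to make the prime small. The divergence is in the one step that carries all the arithmetic weight: passing from a complex point of the representation scheme to a point over a small finite field. The paper does this by quoting Koiran's parametrization theorem (\thm{th:koiran}): a solvable integer polynomial system has a solution $x_i = g_i(\alpha)$ where $h(\alpha)=0$ with $h$ irreducible, $\log \deg h = \poly$ and $\log\log||h|| = \poly$; then \thm{th:low} (Lagarias--Odlyzko, Weinberger, under GRH) supplies a prime $p$ with $\log p = \poly$ and a root $\balpha$ of $h$ in $\Z/p$, and specializing the $g_i$ at $\balpha$ gives a solution in $(\Z/p)^n$ (\thm{th:koiran2}). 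Note that this lands in the prime field automatically: the extension-degree problem that motivates your Lang--Weil detour never arises, so the geometric input you introduce (the $3$-dimensional conjugation orbit through the Kronheimer--Mrowka point) is unnecessary.

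Your substitute argument for that step is where the gap lies. Choosing a geometrically irreducible component of dimension $\ge 3$, finding a small prime for which Frobenius fixes it and it has good reduction, and counting its $\F_p$-points by Lang--Weil is a plausible program, but the ``delicate points'' you concede at the end are not loose ends --- they are the entire technical content, and they are of the same depth as Koiran's theorem itself. Concretely: the degree and the field of definition of the component are controlled only by B\'ezout-type bounds (exponential in the diagram size); the discriminant of that field and the set of primes of bad reduction (where the reduction could drop dimension or lose geometric irreducibility) require effective arithmetic elimination theory to bound; and the error constants in effective Lang--Weil depend on the degree of the component and must be tracked against $p$. All of this can plausibly be pushed through to give $\log p = \poly(\ell)$, but none of it is established in your proposal: what you have written reduces the theorem to a statement essentially equivalent to \thm{th:koiran2} and then sketches how one might reprove that statement. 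The gap closes immediately --- and your $k$ becomes $1$ --- if you replace this portion by a citation of Koiran's Theorems 4 and 7 together with the GRH-effective Chebotarev bound, which is exactly what the paper does.
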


Together with Hass-Lagarias-Pippenger, we can restate the result as
$$\text{Unknottedness} \in \NP \cap \coNP,$$
assuming GRH.  The complexity theory significance of this result is
that unknottedness is not $\NP$-hard, assuming standard conjectures in
both number theory and complexity theory.  (See \sec{s:complex}.)  It is
instead in the class of intermediate problems, such as graph isomorphism
and factoring integers, that either have undiscovered polynomial-time
algorithms or are hard for some other reason.

We clarify the sense in which \thm{th:main} depends on the generalized
Riemann hypothesis.  Since the role of the verifier for a problem in $\NP$
is to evaluate a proof of ``yes", the verifier might need to assume a
conjecture such as GRH to believe the proof.  This is not the case for
our construction in \thm{th:main}.  Our certificates are unconditionally
convincing, and they always exist.  The only role of GRH is to establish
that the certificate has polynomial length.  Instead of assuming all of
GRH, we can assume a much weaker corollary, \thm{th:low}.  The corollary
asserts that for every non-constant univariate integer polynomial $h(x)$,
there is a moderately small prime $p$ such that $h(x)$ has a root in $\Z/p$.

Our proof of \thm{th:main} quickly follows from major results of others.
Kronheimer and Mrowka \cite{KM:dehn} showed that if $K$ is a non-trivial
knot, then there is a non-commutative representation of $$\rho_\C:\pi_1(S^3
\setminus K) \to \SU(2) \subset \SL(2,\C).$$ Then, simply because the
equations for the representation are algebraic, the complex numbers can be
replaced by a finite field $\Z/p$.  Koiran \cite{Koiran:hilbert} showed that
if a polynomial-length set of algebraic equations has a complex solution, and
if GRH is true, then there is a suitable prime $p$ with only polynomially
many digits.  Thus, the certificate is a prime $p$ and a $2 \times 2$
matrix over $\Z/p$ for each generator of the knot group.  The verifier
must check that the generator matrices satisfy the relations of the knot
group; and that they do not all commute, or in the Wirtinger presentation,
that they are not all equal.  This confirms that $K$ cannot be the unknot.

\nocite{DG:apoly}

\subsection{Related work}
\label{s:related}

See \sec{s:complex} for some discussion of the complexity classes
mentioned here.

Koiran \cite[DIMACS~version]{Koiran:hilbert} showed that the question of
whether integer polynomial equations have a complex solution is in $\AM$,
assuming GRH.  (He also remarks that the problem is $\NP$-hard, so it is
$\NP$-complete assuming both GRH and $\NP = \AM$.)  This implies for us that
knottedness is in $\AM$.  Our idea is to use his constructions
rather than his ultimate result to establish that it is in $\NP$.

Aharonov, Jones, and Landau \cite{AJL:approx} gave a polynomial-time
quantum algorithm to approximate the Jones polynomial $J(K,t)$ of a knot
$K$, uniformly for certain values of $t$.  (See also Freedman, Kitaev,
and Wang \cite{FKW:simulation} for an earlier version of this result.)
Given the conjecture that the Jones polynomial distinguishes the unknot,
this result raises the question of whether unknottedness is in $\BQP$.
The error in the Aharonov-Jones-Landau algorithm has an extra exponential
factor that depends on the knot diagram.  For many values of $t$, any
intrinsic approximation of $J(K,t)$ is $\shP$-hard \cite{Kuperberg:jones}.
Whether unknottedness is in $\BQP$ by some other algorithm remains an
interesting open problem.

Hempel \cite{Hempel:residual} showed that the fundamental group of
any compact 3-manifold with geometric pieces (which is now known
to be all compact 3-manifolds) is residually finite.  Broaddus
\cite{Broaddus:noncyclic} refined Hempel's construction with a similar
purpose to ours.

In 2002, Agol \cite{Agol:talk} announced an algorithm in $\NP$, based
on normal surface theory and sutured manifolds, to test a lower bound
for the genus of a knot.  This would imply that knottedness is in $\NP$,
since the unknot is the only knot of genus 0.  As of this writing, the full
argument has not been published, but other experts in 3-manifold topology
believe it \cite{Lackenby:personal}.  Still, our algorithm here
is completely different and it can be expected to have different further
applications.

In 2005, Hara, Tani, and Yamamoto \cite{HTY:unknotting} claimed that
unknottedness is in $\AM \cap \coAM$.  They have since retracted this result
\cite{Hara:personal}.  In 2011, Musick \cite{Musick:trivial} claimed that
unknottedness is in $\P$.   He has since replaced his arXiv preprint with
a withdrawal notice.

\acknowledgments

The author would like to thank Scott Aaronson, Ian Agol, Alex Coward,
Noam Elkies, Joel Hass, Misha Kapovich, Pascal Koiran, Sawn Lawton, Ben
McReynolds, and Felipe Voloch for useful discussions.

\section{Complexity classes}
\label{s:complex}

We review some popular complexity classes, more to interpret \thm{th:main}
and place it in context than to prove it.  See the Complexity Zoo \cite{zoo}
for a survey of complexity classes that we mention here, and many others.

We (informally) defined the classes $\P$ and $\NP$ in the introduction.
If $D$ is a decision problem, we define the class $\P^D$ to be the set
of decision problems $E$ that can be computed in polynomial time with
the aid of an oracle that answers questions in $D$.  Likewise if $\X$ is
a complexity class, we define $\P^\X$ to be the union of $\P^D$ for all
$D \in \X$.  Likewise we can also define $\NP^D$ and $\NP^\X$.  If $\NP
\subseteq \P^D$, then $D$ is called \emph{$\NP$-hard} (in the Cook or
Turing sense); if also $D \in \NP$, then it is called \emph{$\NP$-complete}.

The \emph{polynomial hierarchy} is the union
$$\PH \defeq \bigcup_{n=1}^\infty
    \underbrace{\NP^{\NP^{\iddots^{\NP}}}}_n.$$
An extension of the conjecture that $\P \ne \NP$ asserts that the polynomial
hierarchy does not collapse, \ie, that each term in this union is larger
than the previous one.  As a rough analogy, the non-collapse of $\PH$ is to
$\P \ne \NP$ in complexity theory, as the generalized Riemann hypothesis
is to the Riemann Hypothesis in number theory.  In particular, if $\NP
= \coNP$, then the polynomial hierarchy collapses to its second level.
Brassard \cite{Brassard:note} showed that
$$\NP \cap \coNP = \P^{\NP \cap \coNP}.$$
Thus if
$$D \in \NP \cap \coNP$$
is $\NP$-hard, then $\NP = \coNP$.

A \emph{promise problem} is a decision function $D(x)$ defined only on some
subset of input strings.  (In other words, the set of input strings that
satisfy a promise to the algorithm.)  An algorithm for a promise problem must
compute the value $D(x)$ when $x$ satisfies the promise, and can be either
yes or no arbitrarily otherwise.  In contrast with non-promise problems,
a promise problem in $\NP \cap \coNP$ can be $\NP$-hard.  For instance,
let $D(x)$ be any $\NP$-complete problem, and define $E(x_1,x_2)$ to simply
be $D(x_1)$, given the promise that exactly one of $D(x_1)$ and $D(x_2)$
is yes.  Then it is not hard to show that $D \in \P^E$, so $E$ is
also $\NP$-hard.

Technically speaking, many problems require a syntactical promise, for
example that the input to a computational knot theory algorithm must describe
a valid knot.  If valid input syntax can be checked in polynomial time,
then it is an irrelevant promise, because incorrect input can efficiently be
assigned the value ``no".  So results such as Brassard's theorem still hold.
This is significant for us, because the validity of an encoding of a knot
diagram can be checked in polynomial time.

The class $\BPP$ is the class of questions that can be answered in polynomial
time with random choices, such that the correct answer is computed
with probability at least $\frac23$.  (Equivalently, the probability
of correctness converges to 1 exponentially with repeated trials.)
The class $\BQP$ is similar but harder to define:  It uses algorithms with
access to quantum randomness, or quantum algorithms.  Quantum algorithms
require quantum computers, which do not yet exist.  Nonetheless, $\BQP$
can be taken seriously as a not-necessarily-realistic complexity class,
just like many other complexity classes.  Membership in $\NP \cap \coNP$
is sometimes taken as evidence of membership in $\BQP$, even though the
most popular conjecture is that neither class contains the other.

The class $\AM$ (Arthur-Merlin) is an important variation of $\NP$.
We imagine that the verifier (Arthur) is supplied a certificate by the prover
(Merlin).  In the class $\NP$, Merlin states a proof, then Arthur analyzes
it deterministically and in polynomial time.  In the class $\AM$, Merlin
statistically persuades Arthur by answering randomly chosen questions in a
bounded number of rounds.  It is known that one round is just as powerful as
any bounded number of rounds \cite{BM:am}, and it is known that Arthur might
as well simply show Merlin coin flips \cite{GS:coins}.  Even though $\AM$
looks more powerful than $\NP$, it is a standard conjecture that $\AM = \NP$.
The conjecture is similar to the conjecture $\BPP = \P$; both conjectures
are true if sufficiently good pseudorandom number generators exist.

Another complexity class worth mentioning is $\SZK$ (statistical zero
knowledge).   One way to define this class is as a subclass of $\AM$ in
which Merlin's proof only tells Arthur things that Arthur already knows.
(In other words, Arthur constructs questions to Merlin backwards from their
answers.  Merlin is only told the questions and must derive the correct
answers with high probability.)   Whereas $\AM$ is unlikely to equal $\coAM$,
the class $\SZK$ is closed under complement \cite{Okamoto:relationships}.
It contains problems such as graph (non)isomorphism \cite{GMW:nothing}.

The class $\shP$ is the class of questions whose answers are non-negative
integers rather than yes or no.  By definition, a question in $\shP$
counts the number of certificates that satisfy a polynomial criterion
together with the input, whereas the analogous question in $\NP$ only asks
if there is at least one certificate.  It is known that $\P^{\shP}$,
which captures $\shP$-hardness, contains all of $\PH$ (and $\BQP$ as
well).  So $\shP$-hardness is thought to be qualitatively much harder
than $\NP$-hardness.

\section{Algebra}

In this section and later, we will often use the polynomial-class
bound, that for some function $f(n)$, there exists an $\alpha \ge 0$
such that
$$f(n) = O(n^\alpha).$$
We will write this as 
$$f(n) = \poly(n),$$
and likewise also the multivariate version.

\subsection{Polynomial equations}

Besides recent geometric topology results, our construction rests mainly on
the following theorem of Koiran.  We define the norm $||f||$ of a possibly
multivariate integer polynomial $f$ to be the maximum of $|a|$, where $a$
is a coefficient of $f$.

\begin{theorem}[Koiran {\cite[Thms.~4~\&~7]{Koiran:hilbert}}] Let
$$f_1,f_2,\ldots,f_m \in \Z[x_1,x_2,\ldots,x_n]$$
be non-constant integer polynomials with
$$d = \max_k \deg f_k \qquad r = \max_k ||f_k||.$$
Suppose that the equation $\vf(\vx) = 0$ has a solution in $\C^n$.  Then it
has a solution of the form
$$(x_1,x_2,\ldots,x_n) =  (g_1(\alpha),g_2(\alpha),\ldots,g_n(\alpha)),$$
where
$$g_1, g_2, \ldots, g_n \in \Z[x]$$
are integer polynomials, $\alpha$ is a root of an irreducible
integer polynomial $h \in \Z[x]$, and
\begin{align*}
\log \deg h &= \poly(n,m,\log d,\log r) \\
\log \log ||h|| &= \poly(n,m,\log d,\log r).
\end{align*}
\label{th:koiran} \end{theorem}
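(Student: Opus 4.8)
The plan is to produce a single point of the variety $V = \{\vx \in \C^n : \vf(\vx) = 0\}$ whose coordinates lie in a controlled number field, and then present that field through a primitive element. Since $\vf$ is assumed to have a complex zero, $V$ is non-empty. First I would reduce to the zero-dimensional case: if $V$ has positive dimension, I would intersect it with generic affine hyperplanes to cut the dimension down to zero while keeping the variety non-empty (working projectively to keep the surviving points at finite distance). The key point is that these hyperplanes can be chosen with small integer coefficients: a generic choice works, genericity fails only on a proper subvariety of the space of coefficients, and a Schwartz-Zippel counting argument then shows that integer coefficients of magnitude $\poly(d,n)$ suffice. This enlarges $m$ and $r$ only mildly, so the claimed polynomial dependence is preserved.

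With $V$ now a finite set of points defined over $\mathbb{Q}$, I would take $\alpha = \sum_i c_i x_i$ to be a generic integer linear form in the coordinates. For generic small weights $c_i$ this form separates the points of $V$, so it is a primitive element, and the polynomial $\prod_{\xi \in V}(T - \sum_i c_i \xi_i)$ has rational coefficients, being invariant under the Galois action on the (conjugate) solutions. After clearing denominators its irreducible factors lie in $\mathbb{Z}[T]$; I would let $h$ be one such factor. Its roots are the values of $\alpha$ along a single Galois orbit of solutions, and for that orbit each coordinate $\xi_i$ is expressed as a polynomial $g_i(\alpha)$ recovered by interpolation over the conjugates of $\alpha$. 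This packaging of all the solutions into one univariate polynomial is the classical u-resultant (Chow form) construction, and producing $h$ is an elimination of the variables $x_1,\ldots,x_n$.

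Both bounds then reduce to controlling this elimination, and the main point is to avoid the naive iterated pairwise resultant, which squares degrees at each step and hence blows up doubly exponentially in $n$. For the degree I would instead appeal directly to B\'ezout's theorem: the number of isolated solutions, and therefore $\deg h$, is at most $d^{O(n)}$, so $\log\deg h = O(n\log d) = \poly(n,\log d)$. The hard part will be the height bound. Even with sharp degree control, the coefficients of $h$ are genuinely doubly exponentially large in $n$ --- they are symmetric functions of up to $d^{O(n)}$ algebraic conjugates --- which is exactly why the clean statement bounds $\log\log||h||$ rather than $\log||h||$. To get the sharp estimate I would invoke an arithmetic analogue of B\'ezout's theorem, an arithmetic Nullstellensatz in the style of Philippon or Bost-Gillet-Soul\'e, bounding the logarithmic height of the Chow form of a zero-dimensional intersection in terms of the degrees and logarithmic heights of the defining equations. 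Feeding in $d$ and $\log r$ and tracking the $d^{O(n)}$ growth yields $\log||h|| = d^{O(n)}\,\poly(\log r)$, hence $\log\log||h|| = \poly(n,\log d,\log r)$, as required.
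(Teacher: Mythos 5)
This theorem is not proved in the paper at all: it is imported verbatim from Koiran (Theorems 4 and 7 of \cite{Koiran:hilbert}), and the only commentary the paper adds is that Koiran's statement actually reads $x_k = g_k(\alpha)/a$ and that the denominator $a$ ``can just be absorbed into $\alpha$ and $h$.'' So there is no internal proof to compare your argument against; the fair comparison is with Koiran's own proof, and at that level your sketch has the right architecture. Reduction to dimension zero by generic linear sections with small integer coefficients, a separating linear form as primitive element, an eliminant $h$ whose degree is controlled by B\'ezout (giving $\log \deg h = O(n\log d)$), and a single-exponential height bound (giving $\log ||h|| = d^{O(n)}\,\poly(\log r)$, whence the double logarithm in the statement) is exactly the shape of the effective elimination theory that Koiran invokes. (One quantitative slip: the bad locus for your Schwartz--Zippel argument has degree on the order of $\deg V = d^{O(n)}$, so the hyperplane coefficients may need magnitude $d^{O(n)}$ rather than $\poly(d,n)$; this is harmless, since only their bit-size enters the final bounds.)

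Two more substantive caveats. First, your interpolation step does not deliver what the statement asks for: Galois invariance of the Lagrange interpolant only puts $g_i$ in $\mathbb{Q}[T]$, not $\Z[T]$, and for the naive separating form no integer-coefficient representation need exist, since the coordinates of the solution need not be algebraic integers. This is precisely the denominator $a$ in Koiran's formulation, which the paper's remark then absorbs into $\alpha$ and $h$; your write-up silently asserts integrality and needs the same repair. Second, the arithmetic B\'ezout / arithmetic Nullstellensatz height inequality you invoke ``in the style of Philippon or Bost--Gillet--Soul\'e'' is not a routine citation but the actual mathematical core of the theorem. Granting it, the rest of your outline is sound, so your proposal is best read as a correct reduction of \thm{th:koiran} to that known (and hard) result --- which is, in effect, also what Koiran does.
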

\eatline

Actually, Koiran writes
$$x_k = g_k(\alpha)/a,$$
with
$$\log \log a = \poly(n,m,\log d,\log r).$$
This denominator can just be absorbed into $\alpha$ and $h$.

Now the idea is to find a corresponding root $\balpha$ of $h(x)$ in some
prime field $\Z/p$.

\begin{theorem}[Lagarias-Odlyzko \cite{LO:effective}, Weinberger
\cite{Weinberger:factors}] Let $h \in \Z[x]$ be an irreducible polynomial of
degree $D$ and let $R = ||h||$.  Assuming the generalized Riemann hypothesis,
there is a prime $p$ such that $h(x)$ has a root in $\Z/p$ and such that
$$p = \poly(D,\log R).$$
\label{th:low} \end{theorem}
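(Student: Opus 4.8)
The plan is to translate the condition ``$h$ has a root in $\Z/p$'' into a splitting condition for $p$ in the number field $K = \mathbb{Q}(\alpha)$, where $\alpha$ is a root of $h$, and then to locate a small prime of the right type using the effective prime ideal theorem that Lagarias--Odlyzko and Weinberger establish under GRH. First I would reduce to the monic case: if $h$ is irreducible of degree $D$ with leading coefficient $c$, then $\beta = c\alpha$ is an algebraic integer satisfying a monic irreducible $\tilde h \in \Z[x]$ of degree $D$ with $\log\|\tilde h\| = \poly(D,\log R)$, and a root of $\tilde h$ modulo any $p \nmid c$ yields a root of $h$ modulo $p$. Replacing $h$ by $\tilde h$, I may assume $h$ is monic and $\alpha \in \mathcal{O}_K$ generates $K$ with $[K:\mathbb{Q}] = D$. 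It then suffices to find a small rational prime $p$ carrying a degree-one prime ideal $\mathfrak{p}$ of $\mathcal{O}_K$, since reducing $\alpha$ modulo such a $\mathfrak{p}$ gives $\bar\alpha \in \mathcal{O}_K/\mathfrak{p} \cong \Z/p$ with $h(\bar\alpha) \equiv 0$.

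Next I would bound the discriminant. Since $h$ is monic, $\mathrm{disc}(h) = \pm\res(h,h')$, and a Hadamard bound on the Sylvester determinant gives $\log|\mathrm{disc}(h)| = \poly(D,\log R)$. Because $\mathrm{disc}(h) = [\mathcal{O}_K:\Z[\alpha]]^2\, d_K$, the field discriminant $d_K$ divides $\mathrm{disc}(h)$, so $\log|d_K| = \poly(D,\log R)$ as well. This is the only arithmetic input needed about $K$, and it is exactly what keeps the final bound polynomial.

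The core step is to count degree-one primes of $K$ of small norm. Working directly with the Dedekind zeta function $\zeta_K$, GRH for $\zeta_K$ (a consequence of GRH) and the explicit formula give
\begin{equation}
\pi_K^{(1)}(x) = \mathrm{Li}(x) + O\!\left(\sqrt{x}\,(\log|d_K| + D\log x)\right),
\end{equation}
where $\pi_K^{(1)}(x)$ counts prime ideals of degree one and norm at most $x$. The prime ideals of degree $\ge 2$ have norm at least the square of a rational prime, so they contribute only $O(D\sqrt{x})$ to the total and do not interfere with the main term. Choosing $x = \poly(D,\log|d_K|)$ large enough that $\mathrm{Li}(x)$ dominates the error, I obtain a degree-one prime ideal $\mathfrak{p}$ with prime norm $p = N\mathfrak{p} \le x = \poly(D,\log R)$, and by the reduction above $h$ has a root in $\Z/p$.

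I expect the main obstacle to be precisely the decision to avoid the Galois closure. The naive route---asking for a completely split prime, \ie\ $\mathrm{Frob}_p = 1$ in $\mathrm{Gal}(L/\mathbb{Q})$---is clean group-theoretically but fails quantitatively, since the effective Chebotarev bound scales with $\log|d_L|$, and $\log|d_L|$ grows like $[L:\mathbb{Q}]\log|d_K|$ with $[L:\mathbb{Q}]$ as large as $D!$; the resulting least-prime bound is then exponential in $D$. The key observation is that detecting a single root requires only a degree-one prime of $K$, whose existence at small norm is governed by $\zeta_K$ and by $\log|d_K|$ alone. Making this trade-off carefully, together with the routine bookkeeping for the finitely many primes dividing $c$ or the index $[\mathcal{O}_K:\Z[\alpha]]$, is where the real care lies.
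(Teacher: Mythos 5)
Your proposal is correct and takes essentially the same route as the paper: both arguments rest on the GRH-effective Lagarias--Odlyzko/Weinberger count of degree-one primes of $K=\mathbb{Q}(\alpha)$ (which the paper quotes as the density bound \eqref{e:pih}), combined with a Hadamard bound on the Sylvester determinant giving $\log\Delta_h=\poly(D,\log R)$ and a choice of $x=\poly(D,\log R)$ large enough for the main term to dominate the $\sqrt{x}$ error. The only difference is packaging: the paper cites the density statement as a black box, whereas you re-derive it (monic reduction, $d_K\mid\mathrm{disc}(h)$, the explicit formula for $\zeta_K$, and the observation that avoiding the Galois closure is what keeps the bound polynomial in $D$), which is precisely how the cited result is proved.
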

\eatline

Actually, Lagarias-Odlyzko and Weinberger establish a density version of
\thm{th:low}, which is a special case of the effective \v{C}ebotarev
density theorem.  Koiran \cite[Cor.~1]{Koiran:hilbert} states this density
version as follows: Let $\pi(x)$ be the number of primes $p \le x$, let
$\pi_h(x)$ be the number of primes $p \le x$ modulo which $h(x)$ has a root,
and let $\Delta_h$ be the discriminant of $h$.  Then 
\eq{e:pih}{\pi_h(x) \ge \frac{\pi(x) - \log \Delta_h
    - c\sqrt{x}\log(\Delta_h x^D)}{D},}
where $c$ is an \emph{absolute} constant.  To understand this estimate,
we first recall the prime number theorem:
\eq{e:prime}{\pi(x) \approx \frac{x}{\ln x}.}
Then we recall the formula for the discriminant:
$$\Delta_h = \res(h,h') = \det S(h,h').$$
Here $h'$ is the derivative of $h$, $\res(f,g)$ is the resultant of $f$
and $g$, and $S(f,g)$ is the Sylvester matrix whose determinant is the
resultant.  By separately bounding the terms of $\det S(h,h')$, we obtain
\eq{e:disc}{\log \Delta_h = \poly(D,\log R).}
Equations \eqref{e:pih}, \eqref{e:prime}, and \eqref{e:disc} imply the
estimate in \thm{th:low}.  We only need the weaker estimate
$$\log p = \poly(\log D,\log \log R).$$
We can also interpret \thm{th:low} as stated, or this estimate, as a
separate conjecture in number theory that appears to be much weaker than GRH.

\begin{theorem}[Koiran {\cite[Lem.~3]{Koiran:hilbert}}] Assuming the
hypotheses of \thm{th:koiran} and GRH, there is a prime $p$ with
$$\log p = \poly(n,m,\log d,\log r)$$
such that $\vf(\vx) = 0$ has a solution in $(\Z/p)^n$.
\label{th:koiran2} \end{theorem}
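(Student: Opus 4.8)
The plan is to chain \thm{th:koiran} and \thm{th:low}: use the former to replace the complex solution by an algebraic one governed by a single irreducible polynomial $h$, use the latter to find a small prime $p$ modulo which $h$ acquires a root, and then read off a solution over $\Z/p$. First I would apply \thm{th:koiran} to the system $\vf(\vx) = 0$. By hypothesis it has a complex solution, so it has one of the form $x_k = g_k(\alpha)$, where each $g_k \in \Z[x]$ and $\alpha$ is a root of an irreducible $h \in \Z[x]$ satisfying $\log \deg h = \poly(n,m,\log d,\log r)$ and $\log\log\|h\| = \poly(n,m,\log d,\log r)$.

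The key algebraic step is a divisibility observation. For each $k$, form the univariate composite $F_k(x) \defeq f_k(g_1(x),\ldots,g_n(x)) \in \Z[x]$. Since $x_j = g_j(\alpha)$ solves the system, $F_k(\alpha) = 0$, so $\alpha$ is a common root of all the $F_k$. Because $h$ is irreducible over $\Z$, it is primitive and irreducible over $\mathbb{Q}$, hence a constant multiple of the minimal polynomial of $\alpha$; therefore $h \mid F_k$ in $\mathbb{Q}[x]$, and by Gauss's lemma $h \mid F_k$ in $\Z[x]$. Write $F_k = h\,q_k$ with $q_k \in \Z[x]$.

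Next I would apply \thm{th:low} to $h$, with $D = \deg h$ and $R = \|h\|$. Assuming GRH, this yields a prime $p$ with a root $\balpha$ of $h$ in $\Z/p$ and $p = \poly(D,\log R)$, equivalently $\log p = \poly(\log D, \log\log R)$. Substituting the bounds from the first step, $\log D$ and $\log\log R$ are each $\poly(n,m,\log d,\log r)$, so $\log p = \poly(n,m,\log d,\log r)$, as required. To finish, set $\bar x_k \defeq g_k(\balpha) \in \Z/p$. Reducing the identity $F_k = h\,q_k$ modulo $p$ and evaluating at $\balpha$ gives $f_k(\bar x_1,\ldots,\bar x_n) = F_k(\balpha) = h(\balpha)\,q_k(\balpha) = 0$ in $\Z/p$, since $h(\balpha) = 0$; note this holds regardless of whether $p$ divides the leading coefficient of $h$. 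Hence $(\bar x_1,\ldots,\bar x_n)$ solves $\vf(\vx) = 0$ over $\Z/p$.

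The main obstacle to respect is that $D = \deg h$ is controlled only logarithmically, so both $D$ and the composites $F_k$ may be astronomically large, and the argument must never manipulate $F_k$ explicitly. It succeeds precisely because the divisibility $h \mid F_k$ enters purely as an existence fact rather than a computation, and because \thm{th:low} bounds $\log p$ doubly logarithmically in the data of $h$. This doubly-logarithmic dependence is exactly what converts the $\log \deg h$ and $\log\log\|h\|$ bounds from \thm{th:koiran} into a genuine $\poly(n,m,\log d,\log r)$ bound on $\log p$.
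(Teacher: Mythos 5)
Your proposal is correct and follows essentially the same route as the paper: apply \thm{th:koiran} to get $x_k = g_k(\alpha)$ with $h(\alpha)=0$, apply \thm{th:low} to $h$ to get a small prime $p$ with a root $\balpha \in \Z/p$, and transfer the solution via the divisibility $h \mid f_k(g_1(x),\ldots,g_n(x))$ in $\Z[x]$. You merely spell out details the paper leaves implicit (Gauss's lemma for integral divisibility, the explicit factorization $F_k = h\,q_k$, and the arithmetic converting $p = \poly(D,\log R)$ into $\log p = \poly(n,m,\log d,\log r)$), which is a faithful elaboration rather than a different argument.
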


Again, we need only one such prime.  Koiran needs an adequate supply of
them, to show that deciding the solvability of $\vf(\vx) = 0$ over $\C$
is in $\AM$.  He establishes a statistical gap between the cases in which
there is or is not a solution.

\begin{proof} \thm{th:koiran} yields
$$\vf(\vg(\alpha)) = 0,$$
where $h(\alpha) = 0$ and $h(x)$ is irreducible.  \thm{th:low} provides
a prime $p$ such that $h(x)$ has a root $\balpha \in \Z/p$.  Then
\begin{align*}
f_k(g_k(\alpha)) = 0 &\iff h(x)|f_k(g_k(x)) \in \Z[x] \\
    & \implies f_k(g_k(\balpha)) = 0 \in \Z/p.
\end{align*}
So the existence of $\balpha$ yields a modular solution of $\vf(\vx) = 0$.
\end{proof}

\subsection{Algebraic groups}

Our use of \thm{th:koiran2} rests on the fact that $\SL(2)$ is an affine
algebraic group defined over the integers $\Z$.  The concept of an affine
algebraic group yields an organized definition of versions of a group,
for example $\SL(n,F)$, for every base field (or ring) $F$ simultaneously,
using integer polynomial equations.  In our case, $\SL(2)$ is the ring
$$\Z(\SL(2)) \defeq \Z[x_1,x_2,x_3,x_4]/(x_1x_4 - x_2x_3 - 1),$$
since the relation in this ring expresses the assertion
$$\det \begin{pmatrix} x_1 & x_2 \\ x_3 & x_4 \end{pmatrix} = 1.$$
With such a definition, we can then define the group $\SL(2,F)$ (in
the traditional sense of a set with a group law) as the set of ring
homomorphisms
$$\SL(2,F) \defeq \Hom(\Z(\SL(2)),F).$$
This means that $\SL(2,F)$ satisfies the same polynomial relations for
every $F$.  Also, the group law of $\SL(2,F)$ and the matrix inverse
are expressed by the same polynomial equations for every $F$.  (Since the
determinant is 1, inverse $M^{-1}$ of a matrix $M$ is the same as ``adjugate"
matrix $\adj(M)$, which has a polynomial expression in the entries of $M$.)

In general, an \emph{affine algebraic group} (over $\Z$) is defined in
reverse as a commutative ring $R$ (also over $\Z$) together with a reverse
multiplication map, a reverse identity, and an inverse map:
$$m:R \to R \tensor R \qquad i:R \to \Z \qquad s:R \to R.$$
These maps must satisfy the axioms of a commutative Hopf algebra.  The Hopf
algebra axioms imply that for every commutative ring $F$, the set $\Hom(R,F)$
is a group.  We imagine that $R$ is the coordinate ring of a space $G$,
one that is morally a group, even when we have not yet chosen $F$.  We then
write $R = \Z(G)$ and we write $G(F) = \Hom(R,F)$.  In the terminology
of algebraic geometry, $G = \Spec R$ is a group object in the category of
affine schemes, and $G(F)$ is the set of $F$-rational points of $G$.

\begin{theorem} Let $G$ be an affine algebraic group over $\Z$ and
assume GRH.  Let $\Gamma$ be a discrete group with a finite presentation
of length $\ell$.  If there is a homomorphism
$$\rho_\C:\Gamma \to G(\C)$$
with non-commutative image, then there is also a homomorphism
$$\rho_p:\Gamma \to G(\Z/p)$$
with non-commutative image, for a prime $p$ such that
$$\log p = \poly(\ell).$$
\label{th:prime} \end{theorem}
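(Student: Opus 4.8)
The plan is to encode the existence of a non-commutative homomorphism $\rho_\C : \Gamma \to G(\C)$ as a system of integer polynomial equations and then apply \thm{th:koiran2} to transfer a solution from $\C$ to some $\Z/p$. First I would fix a finite presentation $\Gamma = \langle a_1, \ldots, a_k \mid w_1, \ldots, w_r \rangle$ of length $\ell$, so that $k, r$ and the word lengths are all $O(\ell)$. A homomorphism $\rho : \Gamma \to G(F)$ is determined by the images of the generators, each of which is a point of $G(F) = \Hom(\Z(G), F)$. Writing $\Z(G) = \Z[y_1, \ldots, y_N]/(q_1, \ldots, q_s)$, each generator image is a tuple of $N$ coordinates in $F$ subject to the relations $q_j = 0$; I would introduce variables $\vx$ consisting of $k$ such coordinate-tuples (so $n = kN$ variables total). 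The map $m : R \to R \tensor R$ gives integer polynomials expressing the group law, and composing words in the generators of length $O(\ell)$ yields, for each relator $w_i$, a polynomial condition $\rho(w_i) = \rho(1)$; here the reverse identity $i : R \to \Z$ supplies the coordinates of the identity element. I would collect all these as the integer polynomials $f_1, \ldots, f_m$ in $\vf(\vx) = 0$.

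The key additional ingredient is to algebraically encode non-commutativity of the image. Saying the image is non-commutative means some pair of generator images fails to commute, \ie{} the commutator $\rho(a_ia_ja_i^{-1}a_j^{-1})$ is not the identity for some $i,j$. I would handle this by the standard trick of adjoining one auxiliary variable $z$ together with an equation of the form
$$z \cdot \bigl(\text{(some coordinate of } \rho(a_ia_ja_i^{-1}a_j^{-1}) - 1)\bigr) - 1 = 0,$$
which is solvable precisely when that coordinate is nonzero, thereby forcing the commutator to differ from the identity in that coordinate. Since $\rho_\C$ has non-commutative image by hypothesis, at least one such coordinate is nonzero over $\C$, so after taking (a disjunction over the finitely many pairs $i,j$ and coordinates, or simply fixing the witnessing pair and coordinate) the augmented system $\vf(\vx, z) = 0$ has a solution in $\C^{n+1}$.

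Next I would verify that the parameters of this system are polynomially bounded in $\ell$: the number of variables $n+1$, the number of equations $m$, the degree $d$, and the norm $r = \max_k \|f_k\|$ all satisfy $n, m, \log d, \log r = \poly(\ell)$, because $N, s$ and the coefficient sizes of $m, i, s$ for the fixed group $G$ are constants, while the relator words have length $O(\ell)$ and composing $O(\ell)$ group-law polynomials inflates degree and coefficients only singly-exponentially in word length, so their logarithms stay polynomial. With these bounds in hand, \thm{th:koiran2} applies directly: since $\vf = 0$ has a complex solution, GRH guarantees a prime $p$ with $\log p = \poly(n, m, \log d, \log r) = \poly(\ell)$ such that $\vf = 0$ has a solution in $(\Z/p)^{n+1}$. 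That solution assigns to each generator a point of $G(\Z/p)$ satisfying all relators, hence defines $\rho_p : \Gamma \to G(\Z/p)$; and the auxiliary equation in $z$ certifies that the chosen commutator is nontrivial in $\Z/p$, so the image is non-commutative.

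The main obstacle I expect is the bookkeeping in the previous paragraph, namely proving rigorously that substituting a word of length $O(\ell)$ into the fixed group-law polynomials keeps $\log d$ and $\log r$ polynomial in $\ell$. Evaluating a length-$L$ word amounts to composing $L$ copies of the multiplication map $m$, and a naive bound lets both the degree and the bit-length of coefficients grow like a constant raised to the $L$ power; one must check that this gives $\log d, \log r = \poly(\ell)$ rather than $\poly(\exp \ell)$. This is exactly the regime \thm{th:koiran} and \thm{th:koiran2} are built to tolerate — they take $\log d$ and $\log r$ as inputs — so the estimate goes through, but it is the step that genuinely requires care. Everything else (Hopf-algebra axioms making $\Hom(R, F)$ a group functorially in $F$, and the Rabinowitsch-style encoding of an inequality) is routine once this bound is established.
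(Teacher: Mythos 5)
Your proposal is correct and follows essentially the same route as the paper: encode the homomorphism condition on generators as an integer polynomial system, encode non-commutativity by a Rabinowitsch-style auxiliary equation, and invoke \thm{th:koiran2} to transfer a complex solution to a solution modulo a prime with polynomially many digits. Two differences are worth noting. First, the paper's Rabinowitsch step adjoins a vector of variables $\vt$ and the single relation $\sum_k t_k f_k(\vx,\vy) = 1$, where the $f_k$ are all the coordinates of $\rho(x)\rho(y) - \rho(y)\rho(x)$; this encodes ``some coordinate is nonzero'' without fixing the witnessing pair or coordinate in advance, whereas your single-variable version $z \cdot g - 1 = 0$ must fix them (legitimate here, since the construction may use the existence of $\rho_\C$). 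Second, and more substantively, the paper's phrase ``only more complicated by a constant factor'' points to a different encoding of the relator equations than yours: rather than substituting a length-$O(\ell)$ word directly into the group-law polynomials, one introduces a fresh coordinate tuple for each partial product and one constant-size equation per multiplication (the straight-line idea that reappears in the paper's Section 5). That encoding keeps $d$ and $r$ bounded by constants with $n,m = O(\ell)$, so the hypotheses of \thm{th:koiran2} hold trivially. Your direct-substitution bound --- the step you rightly flag as the main obstacle --- is genuinely fragile for a general $G$: if the fixed Hopf-algebra presentation has multiplication of degree at least $2$ in its first argument, then left-to-right composition along a word can roughly square the coefficient bit-length at each step, making $\log r$ exponential rather than polynomial in $\ell$. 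For $G = \SL(2)$, where multiplication is bilinear, your singly-exponential claim is true and your argument works as written; in general one should use the paper's intermediate-variable encoding, a balanced parenthesization of the word, or an embedding of $G$ into a matrix group, any of which closes the gap.
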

\eatline

\begin{proof} The theorem is a direct application of \thm{th:koiran}.
The condition that $\rho_\C$ or $\rho_p$ is a group homomorphism is the
condition that the images of the generators satisfy the defining relations
of $\Gamma$.  The relations of $\Gamma$ become polynomial equations in the
coordinate ring $\Z(G)$ of the algebraic group $G$.  For a fixed choice
of $G$, encoding the group relations as polynomial equations is only more
complicated by a constant factor.

To establish the non-commutativity property, we use the ``Rabinowitsch trick"
from algebraic geometry.  If $\rho_\C$ is non-commutative, then $\Gamma$
has at least one pair of generators $x$ and $y$ whose images do not commute.
We can write the relation $xy \ne yx$ as a set of polynomial expressions
$\vf(\vx,\vy)$, where $\vx$ and $\vy$ are the variables representing $x$
and $y$, such that for at least one value of $k$,
$$f_k(\vx,\vy) \ne 0.$$
Then we can add new variables $\vt$ together with the polynomial relation
\qedeq{\sum_k t_k f_k(\vx,\vy) = 1.}
\end{proof}

\section{The algorithm}

\subsection{Description of input}

The most traditional way to describe a knot $K$ is by a knot diagram in
the plane with crossings.  Another description is a triangulation of $S^3
\setminus K$, either one in which $K$ is an ideal vertex, or one in which the
peripheral torus of $K$ is triangulated.  A knot complement $S^3 \setminus K$
can also be described by an incomplete Heegaard diagram, in which a solid
torus neighborhood of $K$ appears as one of the 0-handles or 3-handles;
the knot complement is then a special case of a \emph{compression body}.
(We also allow Heegaard diagrams with more than the minimum set of circles.)

Since a triangulation can be thickened to a handle decomposition, it
yields a Heegaard diagram.  (If it is a triangulation of $S^3 \setminus
K$ as a manifold with boundary, we can first collapse the boundary torus
to an ideal vertex.)  The Heegaard diagram has a crossing for every flag
consisting of a triangle and an edge of the triangle.  Therefore if the
triangulation has $n$ tetrahedra, the Heegaard diagram has at most $6n$
crossings, which is a constant factor of overhead.

There is also a canonical conversion from a Heegaard diagram of a knot
complement (or any 3-manifold) to a triangulation, or a knot diagram of
$K$ to a triangulation of $S^3 \setminus K$ with an ideal vertex and two
other vertices.  The easiest way to describe either conversion is by a
simple spine.  A \emph{simple spine} is a 2-complex in $S^3 \setminus K$
which is dual to a generalized triangulation.  If $S^3 \setminus K$ has
a Heegaard diagram, then we can take the simple spine to be the union of
the Heegaard surface $H$ and disks $D_j$ attached at all of the attaching
circles of the Heegaard diagram drawn on $H$.  If the Heegaard diagram has
$n$ crossings, and if every complementary region of the diagram is a disk,
this yields a triangulation with $n$ tetrahedra.

If $K$ has a knot diagram drawn on $S^2$, then we can sew an annulus
onto $S^2$ that follows $K$ and makes a torus sleeve for it in $S^3$
\cite[Fig. 8]{Broaddus:noncyclic}.  If $K$ has $n > 0$ crossings, this
yields a triangulation with $4n$ tetrahedra.

The other conversion is from a triangulation or a Heegaard diagram of
a knot complement to a knot diagram.  An efficient conversion is not
possible in this direction.  For example, if $K$ is a knot and $K'$ is
its $(2,1)$ cable, then their Alexander polynomials satisfy the relation
$$\Delta_{K'}(t) = \Delta_K(t^2).$$ If $K_0$ is a knot with a non-trivial
Alexander polynomial and $K_n$ is the result of $(2,1)$-cabling it $n$
times, then the degree of $\Delta_{K_n}(t)$ grows exponentially in $n$,
and therefore so does the crossing number and even the genus of $K_n$.
On the other hand, $S^3 \setminus K_n$ is a torus sum of $S^3 \setminus
K_0$ with $n$ copies of the $(2,1)$ cable inside of a solid torus.  It
therefore has a triangulation in which the number of simplices only grows
linearly in $n$.

The drawback of a triangulation of a knot complement is that it isn't obvious
when the input is admissible, \ie, when it is in fact a triangulation of
a knot complement.  However, it still makes sense as a promise problem.

\subsection{Proof of the main result}

As stated in the introduction, all of the actual topology in our argument
is in the following result.

\begin{theorem}[Kronheimer-Mrowka \cite{KM:dehn}] If $K \subset S^3$ is
a non-trivial knot, then $\pi_1(S^3\setminus K)$ has a non-commutative
representation in $\SU(2)$.
\label{th:km} \end{theorem}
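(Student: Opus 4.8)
The plan is to treat this as the one genuinely gauge-theoretic input of the paper and to prove the contrapositive: if every $\SU(2)$ representation of $\pi_1(S^3\setminus K)$ has commutative image, then $K$ is the unknot. This reformulation is the right one because, for target group $\SU(2)$, having commutative image is \emph{equivalent} to being reducible: an abelian subgroup of $\SU(2)$ lies in a conjugate of the maximal torus $\mathrm{U}(1)$ and so fixes a common eigenline, while conversely a reducible $\SU(2)$ representation is conjugate into the diagonal torus and hence abelian. Thus a non-commutative representation is exactly an irreducible one, and the whole task is to produce an irreducible $\SU(2)$ representation whenever $K$ is knotted.

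My tool would be instanton Floer homology. For those surgeries $S^3_{1/n}(K)$ that are integral homology spheres, the Floer homology $I_*(S^3_{1/n}(K))$ is generated by the irreducible flat $\SU(2)$ connections, i.e.\ by the irreducible representations of $\pi_1(S^3_{1/n}(K))$. Since that surgered manifold is a homology sphere, its first homology vanishes, so \emph{every} abelian representation is trivial; hence any nonzero class in $I_*$ forces a nontrivial --- therefore irreducible, therefore non-commutative --- representation of $\pi_1(S^3_{1/n}(K))$. As this surgered group is a quotient of $\pi_1(S^3\setminus K)$, the representation pulls back to a non-commutative representation of the knot group. The problem is thereby reduced to a single nonvanishing statement: $I_*(S^3_{1/n}(K))\neq 0$ for some $n$, whenever $K$ is nontrivial.

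The nonvanishing itself I would assemble in three moves. First, Gabai's theory of sutured-manifold hierarchies shows that a nontrivial knot has positive Seifert genus and that the $0$-surgery $Y_0$ is irreducible and carries a taut foliation. Second, one passes from the taut foliation to symplectic geometry (Eliashberg--Thurston), from which a nonvanishing Seiberg--Witten invariant associated to $Y_0$ is extracted, and then invokes Feehan--Leness's proof of Witten's conjecture to transport this to nonvanishing of the corresponding Donaldson/instanton invariant. Third, the instanton surgery exact triangle relating $Y_0$, $S^3$, and the homology sphere $S^3_{1/n}(K)$, together with $I_*(S^3)=0$, propagates the nonvanishing from $Y_0$ to the homology-sphere surgery, producing the required irreducible connection.

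The hard part is plainly this entire middle-and-final apparatus: Gabai's foliations, the Eliashberg--Thurston foliation-to-symplectic construction, the Feehan--Leness bridge between Seiberg--Witten and Donaldson theory, and the delicate analytic control of reducibles and bubbling in the instanton surgery exact triangle. (A more recent and self-contained route replaces the surgery argument by Kronheimer--Mrowka's sutured instanton homology $KHI$, whose generators are again $\SU(2)$ representations and which detects the Seifert genus, hence the unknot, directly.) Either way this is the step the present paper takes wholly on faith from the cited literature; by contrast, everything downstream --- \thm{th:prime} and then \thm{th:main} --- is comparatively soft algebra and complexity theory.
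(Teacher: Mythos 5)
You have correctly identified the situation: the paper contains no proof of \thm{th:km} at all --- it is quoted as the lone topological black box, attributed to Kronheimer--Mrowka \cite{KM:dehn}, and the sentence preceding it (``all of the actual topology in our argument is in the following result'') makes the importation explicit. So the comparison here is not against an internal argument but against the cited literature, and your sketch is a faithful reconstruction of how Kronheimer and Mrowka actually argue. Your preliminary reduction is sound: in $\SU(2)$ the centralizer of a non-central element is the unique maximal torus containing it, so a subgroup with commutative image is conjugate into $\mathrm{U}(1)$, and non-commutative is indeed equivalent to irreducible. The main chain --- Gabai (a nontrivial knot has positive genus and its $0$-surgery $Y_0$ is irreducible with a taut foliation), Eliashberg--Thurston (perturb to weakly fillable contact structures and embed $Y_0$ in a closed symplectic $4$-manifold), Taubes' nonvanishing of Seiberg--Witten invariants, Feehan--Leness to transfer this to Donaldson invariants, and Floer's surgery exact triangle with $I_*(S^3)=0$ to force $I_*(S^3_1(K)) \ne 0$, hence an irreducible flat connection that pulls back to a non-commutative representation of the knot group --- is precisely their Property~P argument, on which \cite{KM:dehn} builds. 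Two small imprecisions: the exact triangle directly relates the triple $(S^3, Y_0, S^3_1(K))$, so you should simply take $n=1$ (general $1/n$ requires iterating triangles, and \cite{KM:dehn} in fact treats all slopes $|r| \le 2$ by further arguments, none of which is needed for the statement as quoted); and since $Y_0$ is not a homology sphere, its term in the triangle must be instanton homology for a nontrivial admissible $U(2)$ bundle, which is where $H_1(Y_0) \cong \Z$ enters. Neither affects the correctness of your outline. What your write-up buys, relative to the paper, is an explanation of why the black box is true; the cost, as you yourself note, is that it rests on the same stack of major theorems as the original, so the present paper is right to cite rather than reprove it.
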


\begin{proof}[Proof of \thm{th:main}] With any of the given descriptions
of $K$, it is easy to write a polynomial-length presentation of $\pi_1(S^3
\setminus K)$.  In the case of a knot diagram, it is the Wirtinger
presentation.  In the case of a Heegaard diagram, we can discard
extra Heegaard circles and then read the presentation from the 
diagram.  The certificate then consists of a prime $p$ and a
non-commutative group homomorphism
$$\rho_p:\pi_1(S^3 \setminus K) \to \SL(2,\Z/p).$$
By \thm{th:km}, there is a non-commutative group homomorphism
$$\rho_\C:\pi_1(S^3 \setminus K) \to \SU(2) \subset \SL(2,\C).$$
By \thm{th:prime}, $p$ and $\rho_p$ are available such that $p$ has at
most polynomially many digits in the length of the description of $K$.
The group homomorphism is described by its values on the generators of
$\pi_1(S^3 \setminus K)$.  The verifier only needs to check that the
relations hold and that the images of the generators do not all commute.
\end{proof}

\section{Bounds and open problems}

One question is whether \thm{th:main} can be established unconditionally,
\ie, without GRH or \thm{th:low}.  As far as we know, proving \thm{th:low}
unconditionally is a difficult open problem in number theory.  The hypotheses
of Theorems \ref{th:koiran} and \ref{th:koiran2} cannot produce every
possible polynomial that satisfies the hypotheses of \thm{th:low}, given
polynomial bounds on $r$ and $d$.  However, we can design the equations
$\vf(\vx) = 0$ as a straight-line program to directly build a wide variety
of choices for $h(x)$.  In other words, we can choose $n-1$ equations in
the inductive form
$$x_{k+1} = f_k(x_1,x_2,\ldots,x_k),$$
so that $x_k$ for $k > 1$ is a polynomial in $x_1$.  Then a
final equation
$$f_n(x_1,x_2,\ldots,x_n) = 0$$
is directly equivalent to $h(x_1) = 0$ for a very complicated polynomial
$h(x)$.  In this case Koiran's construction cannot do better than to
take $x_1 = \alpha$ and use the same $h(x)$.  It seems very possible that
these choices of $h(x)$ are as difficult, in a complexity-theoretic sense,
as all $h(x)$ that satisfy the hypotheses of \thm{th:low}.

We do not know if the constraints on a homomorphism
$$\pi_1(S^3 \setminus K) \to G(\C),$$
for an algebraic group $G$, are as programmable as general algebraic
equations.  If so, it would be evidence that obtaining the type of
certificate described by \thm{th:main} unconditionally is as difficult as
\thm{th:low} itself.  Of course, there could be some completely different
type of certificate for knottedness, or there could be some good algorithm
(possibly a quantum algorithm) to determine knottedness without the aid
of a certificate.

It is easy to show unconditionally that there is a prime modulo which $h(x)$
has a root, but with a much weaker bound than \thm{th:low}.  For instance,
$h(x)$ attains the value $\pm 1$ for at most $2D$ choices of $x$; thus
we can take some other $x$ with $|x| \le D$ and then let $p$ be a prime
divisor of $h(x)$.  (Or, all of our results work just as well when $h$ has
a root in $\F_q$, if $q$ is a prime power that satisfies the same bound.
We can let $p$ be the first prime modulo which $h$ has positive degree, then
define $\F_q$ by an irreducible factor of $h(x)$ over $\Z/p$.)  We still
obtain a non-commutative representation of $\pi_1(S^3 \setminus K)$ into
a group $\SL(2,\Z/p)$, only with a larger prime $p$.  We thus obtain
the following theorem concerning the existence of such a representation,
which in either case improves similar bounds obtained by Broaddus
\cite{Broaddus:noncyclic}.

\begin{theorem} Let $K$ be an non-trivial knot with $\ell$ crossings
(or an ideal triangulation with $\ell$ simplices).   Then the knot
group $\pi_1(S^3 \setminus K)$ has a finite quotient $G$ with
$$|G| = \exp(\poly(\ell))$$
assuming GRH; or 
$$|G| = \exp(\exp(\poly(\ell)))$$
unconditionally.   In either case, $G \cong \SL(2,\Z/p)$ for some prime $p$.
\end{theorem}

Another interesting bound that we can obtain is an effective version of
Mal'cev's theorem \cite{Malcev:faithful}.  Mal'cev's theorem says that if
$\Gamma$ is a finitely generated group, then $\Gamma$ is residually linear if
and only if it is residually finite.  Bou-Rabee \cite{BouRabee:quantifying}
defined a function $F_\Gamma(n)$ that quantifies how quickly the finite
quotients of $\Gamma$ separate elements of $\Gamma$ from the identity.
More precisely, given $g \in \Gamma \setminus \{1\}$, let $F_\Gamma(g)$
be the minimum cardinality of a finite quotient $G$ of $\Gamma$ in which
$g$ is non-trivial.   Assuming that $\Gamma$ has a distinguished set of
generators, let $F_\Gamma(n)$ be the maximum of $F_\Gamma(g)$ over all $g$
with word length at most $n$.  Bou-Rabee and McReynolds \cite{BM:extremal}
established one effective version of Mal'cev's theorem:   If $\Gamma$
is a finitely generated linear group, then $F_\Gamma(n) = \poly(n)$
(non-uniformly in the choice of $\Gamma$).  Using the methods of this
paper, we can obtain a different type of bound.   Let $L_\Gamma(g)$ be
the smallest dimension of a complex linear representation of $\Gamma$
that distinguishes $g$ from the identity, and let $L_\Gamma(n)$ be the
maximum of $L_\Gamma(g)$ over all $g$ of word length at most $n$.  Then:

\begin{theorem} Suppose that $\Gamma$ is a group with a finite presentation
of total length $\ell$.  Then
$$F_\Gamma(n) = \exp(\poly(n,\ell,L_\Gamma(n)))$$
assuming GRH, and
$$F_\Gamma(n) = \exp(\exp(\poly(n,\ell,L_\Gamma(n))))$$
unconditionally, uniformly in $\Gamma$.
\end{theorem}

This theorem can be proved in the same way as \thm{th:prime}.  In particular,
if $\Gamma$ is a fixed, finitely presented group, the function $F_\Gamma(n)$
cannot grow too much faster than $L_\Gamma(n)$.

We conclude with two questions about the possible intermediate complexity
of unknottedness, short of proving that it lies in $\P$ or in $\BPP$.
Is unknottedness in $\SZK$?  Is it in $\BQP$?   The class $\SZK$ lies in $\AM
\cap \coAM$ (since it lies in $\AM$ by definition and is self-complementary),
and thus conjecturally lies in $\NP \cap \coNP$.  The class $\BQP$ is
not thought to lie in $\NP \cap \coNP$.  However, some important decision
problems (for example those related to factoring and discrete logarithms
in number theory) are known to lie in $\BQP \cap \NP \cap \coNP \cap \SZK$.


\begin{thebibliography}{10}

\bibitem{Agol:talk}
Ian Agol, \emph{Thurston norm is polynomial time certifiable}, 2002,
  \url{http://homepages.math.uic.edu/~agol/coNP/coNP01.html}.

\bibitem{AKS:primes}
Manindra Agrawal, Neeraj Kayal, and Nitin Saxena, \emph{{PRIMES} is in {P}},
  Ann. of Math. (2) \textbf{160} (2004), no.~2, 781--793.

\bibitem{AJL:approx}
Dorit Aharonov, Vaughan Jones, and Zeph Landau, \emph{A polynomial quantum
  algorithm for approximating the {Jones} polynomial}, Algorithmica \textbf{55}
  (2009), no.~3, 395--421, \eprint{arXiv:quant-ph/0511096}.

\bibitem{AM:elliptic}
A.~O.~L. Atkin and F.~Morain, \emph{Elliptic curves and primality proving},
  Math. Comp. \textbf{61} (1993), no.~203, 29--68.

\bibitem{BM:am}
L\'aszl\'o Babai and Shlomo Moran, \emph{{Arthur-Merlin} games: a randomized
  proof system, and a hierarchy of complexity classes}, J. Comput. System Sci.
  \textbf{36} (1988), no.~2, 254--276.

\bibitem{BouRabee:quantifying}
Khalid Bou-Rabee, \emph{Quantifying residual finiteness}, J. Algebra
  \textbf{323} (2010), no.~3, 729--737, \eprint{arXiv:0807.0862}.

\bibitem{BM:extremal}
Khalid Bou-Rabee and D.~B. McReynolds, \emph{Extremal behavior of divisibility
  functions}, \eprint{arXiv:1211.4727}.

\bibitem{Brassard:note}
Gilles Brassard, \emph{A note on the complexity of cryptography}, IEEE Trans.
  Inform. Theory \textbf{25} (1979), no.~2, 232--233.

\bibitem{Broaddus:noncyclic}
Nathan Broaddus, \emph{Noncyclic covers of knot complements}, Geom. Dedicata
  \textbf{111} (2005), 211--239, \eprint{arXiv:math/0401120}.

\bibitem{DG:apoly}
Nathan~M. Dunfield and Stavros Garoufalidis, \emph{Non-triviality of the
  {$A$}-polynomial for knots in {$S^3$}}, Algebr. Geom. Topol. \textbf{4}
  (2004), 1145--1153, \eprint{arXiv:math/0405353}.

\bibitem{FKW:simulation}
Michael~H. Freedman, Alexei Kitaev, and Zhenghan Wang, \emph{Simulation of
  topological field theories by quantum computers}, Comm. Math. Phys.
  \textbf{227} (2002), no.~3, 587--603, \eprint{arXiv:quant-ph/0001071}.

\bibitem{GMW:nothing}
Oded Goldreich, Silvio Micali, and Avi Wigderson, \emph{Proofs that yield
  nothing but their validity, or {All} languages in {NP} have zero-knowledge
  proof systems}, J. Assoc. Comput. Mach. \textbf{38} (1991), no.~3, 691--729.

\bibitem{GK:elliptic}
Shafi Goldwasser and Joe Kilian, \emph{Primality testing using elliptic
  curves}, J. ACM \textbf{46} (1999), no.~4, 450--472.

\bibitem{GS:coins}
Shafi Goldwasser and Michael Sipser, \emph{Private coins versus public coins in
  interactive proof systems}, Proceedings of the eighteenth annual ACM
  symposium on Theory of computing, STOC '86, ACM, 1986, pp.~59--68.

\bibitem{Haken:normal}
Wolfgang Haken, \emph{Theorie der {Normalfl\"achen}}, Acta Math. \textbf{105}
  (1961), 245--375.

\bibitem{Hara:personal}
Masao Hara, \emph{personal communication}, 2011.

\bibitem{HTY:unknotting}
Masao Hara, Seiichi Tani, and Makoto Yamamoto, \emph{Unknotting is in
  {$\mathsf{AM} \cap \mathsf{coAM}$}}, Proceedings of the {Sixteenth Annual
  ACM-SIAM Symposium on Discrete Algorithms}, ACM, 2005, pp.~359--364
  (electronic).

\bibitem{HLP:complexity}
Joel Hass, Jeffrey~C. Lagarias, and Nicholas Pippenger, \emph{The computational
  complexity of knot and link problems}, J. ACM \textbf{46} (1999), no.~2,
  185--211, \eprint{arXiv:math.GT/9807016}.

\bibitem{Hempel:residual}
John Hempel, \emph{Residual finiteness for $3$-manifolds}, Combinatorial group
  theory and topology ({Alta, Utah}, 1984), Ann. of Math. Stud., vol. 111,
  Princeton Univ. Press, 1987, pp.~379--396.

\bibitem{Koiran:hilbert}
Pascal Koiran, \emph{Hilbert's {Nullstellensatz} is in the polynomial
  hierarchy}, J. Complexity \textbf{12} (1996), no.~4, 273--286, \eprint{DIMACS
  TR 96-27}, Special issue for FOCM 1997.

\bibitem{KM:dehn}
Peter Kronheimer and Tomasz Mrowka, \emph{Dehn surgery, the fundamental group
  and {$\text{SU}(2)$}}, Math. Res. Lett. \textbf{11} (2004), no.~5-6,
  741--754, \eprint{arXiv:math/0312322}.

\bibitem{Kuperberg:jones}
Greg Kuperberg, \emph{How hard is it to approximate the {Jones} polynomial?},
  \eprint{arXiv:0908.0512}, To appear in \emph{Theory Comput.}

\bibitem{Lackenby:personal}
Marc Lackenby, \emph{personal communication}, 2012.

\bibitem{LO:effective}
Jeffrey~C. Lagarias and Andrew~M. Odlyzko, \emph{Effective versions of the
  {Chebotarev} density theorem}, Algebraic number fields: {$L$}-functions and
  {Galois} properties ({Proc. Sympos., Univ. Durham}, 1975), Academic Press,
  1977, pp.~409--464.

\bibitem{Malcev:faithful}
A.~I. Mal'cev, \emph{On the faithful representation of infinite groups by
  matrices}, Amer. Math. Soc. Transl. (2) \textbf{45} (1965), 1--18.

\bibitem{Miller:primality}
Gary~L. Miller, \emph{Riemann's hypothesis and tests for primality}, J. Comput.
  System Sci. \textbf{13} (1976), no.~3, 300--317.

\bibitem{Musick:trivial}
Chad Musick, \emph{Recognizing trivial links in polynomial time},
  \eprint{arXiv:1110.2871}, withdrawn in version 3.

\bibitem{Okamoto:relationships}
Tatsuaki Okamoto, \emph{On relationships between statistical zero-knowledge
  proofs}, J. Comput. System Sci. \textbf{60} (2000), no.~1, 47--108.

\bibitem{Pratt:prime}
Vaughan~R. Pratt, \emph{Every prime has a succinct certificate}, SIAM J.
  Comput. \textbf{4} (1975), no.~3, 214--220.

\bibitem{Rabin:primality}
Michael~O. Rabin, \emph{Probabilistic algorithm for testing primality}, J.
  Number Theory \textbf{12} (1980), no.~1, 128--138.

\bibitem{Weinberger:factors}
Peter~J. Weinberger, \emph{Finding the number of factors of a polynomial}, J.
  Algorithms \textbf{5} (1984), no.~2, 180--186.

\bibitem{Welsh:complexity}
Dominic J.~A. Welsh, \emph{The complexity of knots}, Quo vadis, graph theory?,
  Ann. Discrete Math., vol.~55, North-Holland, 1993, pp.~159--171.

\bibitem{zoo}
\emph{{The Complexity Zoo}}, http://www.complexityzoo.com/.

\end{thebibliography}

\providecommand{\bysame}{\leavevmode\hbox to3em{\hrulefill}\thinspace}
\providecommand{\MR}{\relax\ifhmode\unskip\space\fi MR }
\providecommand{\MRhref}[2]{%
  \href{http://www.ams.org/mathscinet-getitem?mr=#1}{#2}
}
\providecommand{\href}[2]{#2}
\providecommand{\eprint}{\begingroup \urlstyle{tt}\Url}

\end{document}